\theoremstyle{plain}
\newtheorem{thm}{Theorem}[section]
\newtheorem{lem}[thm]{Lemma}
\theoremstyle{definition}
\title{On the Splitting Principle of Beniamino Segre}
\author{Camilla Felisetti and Claudio Fontanari}
\date{}
\thanks{This research project was partially supported by GNSAGA of INdAM and by PRIN 2017 ``Moduli Theory and Birational Classification".\\
{\em 2020 Mathematics Subject Classification.} 14-03, 14D05, 14D06.  
\newline \noindent{{\em Keywords and phrases.} Splitting principle, Connectedness principle, Algebraic system, Flat family, Nodal curve.}}
\begin{document}

\begin{abstract}
We state and prove in modern terms a Splitting Principle first claimed 
by Benia\-mino Segre in 1938, which should be regarded as a strong form of the 
classical Principle of Connectedness.
\end{abstract}

\maketitle

\section{Introduction}

The memoir \cite{Segre38}, appeared in the \emph{Annali di Matematica Pura e Applicata} 
in 1938, is the last paper published in Italy by Beniamino Segre before the end of 
Second World War, as a consequence of the odious racial laws promulgated by the 
Italian Fascist Regime in that same year. 

The title \emph{Un teorema fondamentale della geometria sulle superficie algebriche 
ed il principio di spezzamento} alludes to a Splitting Principle stated there for the first 
time (albeit with a sloppy formulation and an incomplete proof). Francesco Severi, who 
praised that work as the best one of his former student (see \cite{BC98}, p. 233), 
extensively reconsidered it in his treatise \cite{Severi59}, published in 1959. 

Such Splitting Principle should be regarded as a strong form of the classical Principle 
of Connectedness, attributed by Segre to Enriques and by Severi to Noether, stating 
that if the general member of a flat family $\{ X_t \}$ of closed subschemes of 
$\mathbb{P}^k$ parameterized by an irreducible curve $T$ of finite type is connected, 
then $X_t$ is connected for all $t \in T$ (see for instance \cite{Hartshorne77}, 
III, Exercise 11.4 on p. 281, or \cite{Sernesi09}, Proposition 6.5). 

In modern terms, we state it as follows (see \cite{Segre38}, p. 111, and \cite{Severi59}, pp. 81-82): 

\begin{thm}[Splitting Principle]\label{splitting}
	Let $\{ E \}$ be a flat family over a normal base of nodal curves on a smooth surface $F$ of geometric genus $p_g$. Suppose that the general element $E$ of $\{ E \}$ is irreducible and that a special element $E_0$ of $\{ E \}$ splits as $E_0=C+D$ with $C,D$ irreducible. 
	Let $\Gamma := C\cap D = \Gamma_1 \sqcup \Gamma_2$, where $\Gamma_1$ is the set of points which are limits of nodes of the general curve in $\{E \}$ and $\Gamma_2$ is its complement in $\Gamma$. 
	Assume that $\vert D_{\mid D}\vert$ is non-empty and $C$ is sufficiently general with respect to $D$, in particular that $\vert C(-\Gamma_1)\vert$ has no base points on $D$.
	If $c_i$ is the cardinality of $\Gamma_i$
	then we have $c_2\geq p_g+1$, unless the points in $\Gamma_2$ are linearly dependent with respect to $K_F$.
	
\end{thm}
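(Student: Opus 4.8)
The plan is to translate the hypotheses into a first-order smoothing criterion on the component $D$, and then to compare this criterion, via Serre duality and adjunction, with the conditions imposed by $\Gamma_2$ on the canonical system of $F$.

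First I would set up the deformation-theoretic dictionary. Write $E_0=C+D$ as the zero locus of $s_C\cdot s_D$ with $s_C\in H^0(F,\mathcal{O}_F(C))$, $s_D\in H^0(F,\mathcal{O}_F(D))$, and set $L:=\mathcal{O}_F(C+D)$. A first-order deformation of $E_0$ inside $|L|$ is a section $\sigma\in H^0(F,L)$; in local coordinates $(x,y)=(s_C,s_D)$ at a point $p\in\Gamma$ the deformed equation reads $xy+t\,\sigma(p)+\cdots$, so the node at $p$ is preserved to first order exactly when $\sigma(p)=0$ and smoothed when $\sigma(p)\neq0$. Since $\Gamma_1$ consists of the limits of the nodes of the general (irreducible) member and $\Gamma_2$ of the nodes created in the limit, the family $\{E\}$ selects deformations with $\sigma|_{\Gamma_1}=0$ and $\sigma|_{\Gamma_2}$ nowhere zero. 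Restricting along $0\to\mathcal{O}_F(C)\to L\to L|_D\to0$ and using $L|_D\cong\mathcal{O}_D(\Gamma_1+\Gamma_2+D_{\mid D})$, the trace $\tau:=\sigma|_D$ lies, after imposing vanishing on $\Gamma_1$, in $H^0(D,\mathcal{O}_D(\Gamma_2+D_{\mid D}))$ and must be non-vanishing at every point of $\Gamma_2$. The assumptions that $\vert D_{\mid D}\vert$ is non-empty and that $\vert C(-\Gamma_1)\vert$ has no base points on $D$ are precisely what produce such a $\tau$: the trace of a product $s_{C'}s_{D'}$ with $C'\in\vert C(-\Gamma_1)\vert$ and $D'\in\vert D\vert$ general already avoids $\Gamma_2$, so the relevant image in $H^0(D,\mathcal{O}_D(\Gamma_2+D_{\mid D}))$ is contained in no coordinate hyperplane indexed by $\Gamma_2$.

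Next I would extract the cohomological obstruction from the exact sequence
$$0\to\mathcal{O}_D(D_{\mid D})\to\mathcal{O}_D(\Gamma_2+D_{\mid D})\to\mathcal{O}_{\Gamma_2}\to0,$$
whose connecting map $\delta\colon\mathbb{C}^{c_2}\to H^1(D,\mathcal{O}_D(D_{\mid D}))$ detects exactly the failure to realize prescribed non-zero values of $\tau$ on $\Gamma_2$. By Serre duality (with the dualizing sheaf $\omega_D$) and adjunction $\omega_D=(K_F+D)|_D$ one gets $H^1(D,\mathcal{O}_D(D_{\mid D}))^\vee\cong H^0(D,\omega_D\otimes\mathcal{O}_D(-D_{\mid D}))=H^0(D,K_F|_D)$, and under this identification the transpose of $\delta$ is the evaluation map $\mathrm{ev}_{\Gamma_2}\colon H^0(D,K_F|_D)\to\bigoplus_{p\in\Gamma_2}(K_F|_D)_p$. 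The existence of the nowhere-vanishing trace $\tau$ then forces $\mathrm{ev}_{\Gamma_2}$ to be non-surjective, that is, $\Gamma_2$ fails to impose independent conditions on the restricted canonical system $\vert K_F|_D\vert$.

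Finally I would descend from $D$ to $F$. The restriction $H^0(F,K_F)\to H^0(D,K_F|_D)$, with kernel $H^0(F,K_F-D)$, identifies the conditions imposed by $\Gamma_2$ on $\vert K_F\vert$ with those imposed on a subsystem of $\vert K_F|_D\vert$. Since $h^0(F,K_F)=p_g$, the image of $\mathrm{ev}_{\Gamma_2}$ on this subsystem has dimension at most $\min(c_2,p_g)$; combining this with the non-surjectivity obtained above yields the dichotomy: either $c_2\geq p_g+1$, or the rank of $\mathrm{ev}_{\Gamma_2}$ on the image of $H^0(F,K_F)$ drops below $c_2$ and the points of $\Gamma_2$ are linearly dependent with respect to $K_F$. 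The main obstacle — and the step where the hypothesis that $C$ be sufficiently general with respect to $D$ is needed — is this last comparison: one must ensure that the dependence detected on the enlarged system $\vert K_F|_D\vert$ genuinely reflects dependence with respect to $K_F$ itself, which amounts to controlling $H^0(F,K_F-D)$ and the cokernel of the restriction map, and to checking that the first-order smoothing criterion on $D$ faithfully records the global, possibly higher-order, degeneration over the normal base.
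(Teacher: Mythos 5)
Your reduction of the problem to the non-surjectivity of $\mathrm{ev}_{\Gamma_2}\colon H^0(D,K_{F_{\mid D}})\to\bigoplus_{p\in\Gamma_2}(K_{F_{\mid D}})_p$ via the connecting map $\delta$ is sound, but the input it needs --- a section of $\mathcal{O}_D(\Gamma_2+D_{\mid D})$ with a nonzero value somewhere on $\Gamma_2$ --- is exactly where your proposal has a genuine gap, because neither of your two justifications works. The deformation-theoretic one fails since ``not a limit of nodes'' only says the node at $p\in\Gamma_2$ is smoothed somewhere in the family, not that it is smoothed to \emph{first order}: in the family $xy=t^2$ the node disappears for $t\neq 0$ while the first-order term $\sigma$ vanishes at it, so one cannot conclude $\sigma(p)\neq 0$ (this is the ``higher-order degeneration'' you mention at the end, and it cannot be waved away; moreover the algebraic family $\{E\}$ need not even lie in the linear system $|C+D|$ when $F$ is irregular). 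The product construction fails as well: the only hypothesis on $D$ is that $|D_{\mid D}|$ is \emph{non-empty}, not base-point free, so a general $D'\in|D|$ (or a general member of $|D_{\mid D}|$) need not avoid $\Gamma_2$; in the extreme case $h^0(F,\mathcal{O}_F(D))=1$ the only choice is $D'=D$, whose trace on $D$ is identically zero. The paper's Lemma \ref{severi} repairs precisely this point by a dimension count on the \emph{complete} system: if every point of $\Gamma_2$ were a base point of $|{E_0}_{\mid D}-\Gamma_1|=|D_{\mid D}+\Gamma_2|$, then $h^0(D,D_{\mid D}+\Gamma_2)=h^0(D,D_{\mid D})$, while $h^0(D,C_{\mid D}-\Gamma_1)\geq 2$ (from base-point-freeness of $|C(-\Gamma_1)|$ on $D$ and $\Gamma_2\neq\emptyset$) together with $h^0(A+B)\geq h^0(A)+h^0(B)-1$ forces $h^0(D,D_{\mid D}+\Gamma_2)\geq h^0(D,D_{\mid D})+1$, a contradiction. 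This produces only \emph{one} non-base point $P\in\Gamma_2$, weaker than your nowhere-vanishing $\tau$, but enough: a single nonzero value already gives $\ker\delta\neq 0$, hence $\rank\delta\leq c_2-1$. Note finally that both this lemma and the theorem itself require $\Gamma_2\neq\emptyset$, which you never establish; the paper's Lemma \ref{nonempty} obtains it from Teissier's simultaneous resolution theorem combined with the classical Principle of Connectedness, a geometric ingredient entirely absent from your proposal.

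Conversely, the step you flag as the ``main obstacle'' is actually immediate, and no control of $H^0(F,K_F-D)$ or of the cokernel of restriction is needed: evaluation of sections of $K_F$ at the points of $\Gamma_2\subset D$ factors through restriction to $D$, so
$$\rank\bigl(H^0(F,K_F)\to\mathbb{C}^{c_2}\bigr)\leq\rank\bigl(\mathrm{ev}_{\Gamma_2}\colon H^0(D,K_{F_{\mid D}})\to\mathbb{C}^{c_2}\bigr)\leq c_2-1,$$
whence $h^0(F,K_F-\Gamma_2)\geq p_g-c_2+1>p_g-c_2$, i.e.\ the points of $\Gamma_2$ are dependent with respect to $K_F$; this single conclusion subsumes the stated dichotomy, since for $c_2\geq p_g+1$ dependence is automatic. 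So the difficulty is distributed in the opposite way from what you suppose: the real content lies in producing the section $\tau$ (Lemma \ref{severi}) and in proving $\Gamma_2\neq\emptyset$ (Lemma \ref{nonempty}), while the descent from $D$ to $F$ is a one-line factoring. It is worth adding that once those two lemmas are imported, your $\delta$/Serre-duality formulation is correct and in fact tidier than the paper's own argument, which splits into the cases $c_2\geq d+1$ and $c_2\leq d$ (with $d=h^1(D,D_{\mid D})$) and, in the latter, reasons via Riemann--Roch and an auxiliary effective divisor $A\in|K_F-\Gamma_2^*|$; the factoring observation above treats both cases uniformly.
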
		

 The assumptions that all curves in $\{ E \}$ are nodal and that
 $C$ is general with respect to $D$ are both missing from Segre's 
 statement in \cite{Segre38} and are added by Severi in \cite{Severi59}, p. 81. 
 We point out that the splitting of $\Gamma$ into the disjoint union of $\Gamma_1$ 
 and $\Gamma_2$ is well-defined only if $E_0$ is assumed to have double points 
 as singularities (hence it is implicit in Segre's argument, see in particular \cite{Segre38}, 
 \S 10, p. 122: "\emph{I punti di $\Gamma$ si potranno allora distinguere in due categorie, 
 secondoch\`e provengono o meno come limiti da punti doppi di $E$, ossia rispettivamente a seconda 
 che non risultano oppure risultano punti di collegamento tra $C$ e $D$; denotiamo ordinatamente 
 con $\Gamma_1$, $\Gamma_2$ i gruppi costituiti dai punti del primo o del secondo tipo,} 
 (...) \emph{talch\`e sar\`a $\Gamma=\Gamma_1 + \Gamma_2$}").
 Furthermore, Severi's statement in \cite{Severi59}, p. 81, assumes that the curve $D$ is 
 \emph{ordinaria}, which in particular implies that the characteristic series $E_{\mid D}$ 
 is complete on $D$. Severi in \cite{Severi59}, p. 197, comments: "(...) \emph{abbiamo stabilito 
 nel n. 23 il notevole principio di spezzamento di B. Segre contenuto nella Memoria degli Annali 
 di Matematica, 1938, per\`o sotto l'ipotesi, qui aggiunta, che 
 una componente del limite della curva che tende a spezzarsi sia 
 una curva ordinaria irriducibile (appartenente cio\`e totalmente 
 ad un sistema irriducibile avente su di essa la serie caratteristica 
 completa). Siccome questo principio fa entrare in giuoco soltanto il genere 
 geometrico $p_g$ della superficie $F$ e non l'irregolarit\`a, \`e ragionevole supporre
 (n. 23) che il suo fondamento topologico sia in relazione soltanto ai cicli 
 bidimensionali della riemanniana di $F$; epper\`o, siccome esso \`e vero, qualunque sia
$p_g$, sopra le superficie regolari, e per $p_g=0$ su ogni superficie irregolare
 apparisce naturale di supporre che il principio stesso sia vero sempre.}"
 Finally, in \cite{Severi59}, p. 81, the curve $D$ is assumed to be nonsingular 
 ("\emph{priva di nodi}"), but our modern proof shows that also this assumption 
 is unnecessary. Indeed, the main ingredients are Riemann-Roch theorem, Serre duality 
 and adjunction formula, which hold also in the singular case up to replacing the canonical 
 bundle $K_D$ with the dualizing sheaf $\omega_D$ (see for instance \cite{BHPV04}, p. 62).
 We are going to apply these formulas to restrictions to $D$ of divisors 
 on the smooth surface $F$, hence to Cartier divisors on the nodal curve $D$. 

\section{The proofs}

Our proof of Theorem \ref{splitting} relies on a couple of crucial remarks. 

\begin{lem}\label{nonempty}
We have $\Gamma_2 \neq \emptyset$.
\end{lem}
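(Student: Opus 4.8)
The plan is to deduce the non-emptiness of $\Gamma_2$ from the Principle of Connectedness, by showing that if $\Gamma_2$ were empty the family could be partially normalized into one violating that principle. First I would reduce to a one-dimensional base: since the base is normal and the general element of $\{E\}$ is irreducible, I restrict the family to a general complete irreducible curve $T$ through the point parametrizing $E_0$, so that $E_0 = C + D$ still occurs as a special fibre while the general fibre $E_t$ stays irreducible, in particular connected. Applying the Principle of Connectedness already yields $\Gamma = C \cap D \neq \emptyset$; the real content is to exclude the possibility $\Gamma = \Gamma_1$.

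So I assume, towards a contradiction, that $\Gamma_2 = \emptyset$, i.e. that every point of $C \cap D$ is a limit of nodes of the general curve. Let $\pi \colon \mathcal{E} \to T$ be the total space of the restricted family; it is an irreducible surface, since $T$ is irreducible and the general fibre is irreducible. I would next read off the local structure of $\mathcal{E}$ along $\Gamma$: at a point $p \in \Gamma_1$ the corresponding node of $E_t$ persists as $t \to 0$, so $\pi$ is locally a trivial family of nodes and $\mathcal{E}$ is analytically $\{xy = 0\}$, two smooth sheets (one carrying $C$, the other $D$) meeting along the node locus; by contrast, at a point of $\Gamma_2$ the node would be smoothed, $\mathcal{E}$ would be normal there, and the two sheets would merge. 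Under the assumption $\Gamma_2 = \emptyset$, the surface $\mathcal{E}$ is therefore non-normal all along $\Gamma$, with $C$ and $D$ lying on distinct sheets at each point of $\Gamma$.

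I would then pass to the normalization $\nu \colon \widetilde{\mathcal{E}} \to \mathcal{E}$ and set $\tilde{\pi} = \pi \circ \nu$. Since $\mathcal{E}$ is irreducible, $\widetilde{\mathcal{E}}$ is connected; being a finite modification it is proper over $T$, and being normal and dominating the smooth curve $T$ it is flat over $T$. Over $t \neq 0$ the fibre of $\tilde{\pi}$ is the partial normalization of the irreducible curve $E_t$ at the nodes specializing to $\Gamma$, which is again connected (the normalization of an irreducible curve is irreducible, and separating only some of its nodes leaves it connected). The special fibre, however, is the normalization of $E_0 = C + D$ at every point of $\Gamma = C \cap D$, which separates $C$ from $D$ completely and is hence disconnected. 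This contradicts the Principle of Connectedness applied to $\tilde{\pi} \colon \widetilde{\mathcal{E}} \to T$, and the contradiction proves $\Gamma_2 \neq \emptyset$.

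The step I expect to be the main obstacle is the middle one: justifying rigorously that the normalized total space $\widetilde{\mathcal{E}}$ is flat and proper over $T$ with the asserted special fibre, so that the Principle of Connectedness genuinely applies. It is also worth emphasizing that irreducibility of the general member, and not merely its connectedness, is what drives the argument: both types of points of $\Gamma$ keep $C$ and $D$ connected in $E_t$, so connectedness alone cannot distinguish $\Gamma_1$ from $\Gamma_2$; it is only after normalizing the persistent nodes that the distinction, and the contradiction, becomes visible.
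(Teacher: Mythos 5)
Your proposal is correct and its overall strategy coincides with the paper's: assuming $\Gamma_2=\emptyset$, produce a simultaneous (partial) normalization of the family in which $C$ and $D$ become disjoint in the special fibre while the general fibre remains connected, and contradict the Principle of Connectedness. The difference is in how that normalization is obtained. The paper invokes Teissier's simultaneous resolution theorem ([T76], Th\'eor\`eme 1) directly over the normal base; you instead restrict to a general curve $T$ through the point parametrizing $E_0$ and normalize the total space by hand, which amounts to a self-contained proof of exactly the special case of Teissier's theorem that is needed. This buys elementarity and also some robustness: your pointwise analysis does not require \emph{all} nodes of $E_0$ to be limits of nodes of $E_t$ (interior nodes of $C$ or of $D$ need not be, and at such points the total space is already normal --- locally $xy=t^k$, an $A_{k-1}$-singularity --- so normalization changes nothing there), whereas the paper's one-line assertion that all nodes of $E_0$ lie in $\Gamma_1$ glosses over precisely this. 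The step you flag as the main obstacle (flatness, properness, identification of the fibres) is in fact fine for the reasons you already give: an integral normal scheme dominating a smooth curve is torsion-free, hence flat, over it; normalization is finite, so the composite with the proper family is proper; and your local models compute the fibres. The one point you assume silently, and which genuinely needs an argument, is that after restricting to $T$ the points of $\Gamma_1$ are \emph{still} limits of nodes of $E_t$ for $t\in T$: $\Gamma_1$ is defined via the general member of the whole family, and a priori the nodes approaching a given $p\in\Gamma_1$ could do so only along directions in the base that $T$ misses. This does hold for general $T$: the closure of the node locus is finite over a neighbourhood of the base point (its central fibre lies in the finite set of singular points of $E_0$), and every point of the central fibre of a finite surjective morphism is a limit of points lying over any local complete intersection curve through the image point, by a Krull height/going-down argument. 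Alternatively, you can avoid the issue altogether by swapping your two steps --- normalize over the full normal base first, as the paper in effect does, and only then restrict to a curve in order to apply the Connectedness Principle.
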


\begin{proof}
If $\Gamma_2 = \emptyset$ then all nodes of $E_0 = C+D$ belong to $\Gamma_1$, i.e. they are
limits of nodes of $E$. Hence by \cite{T76}, Th\'eor\`eme 1 on p. 73, locally around $E_0$ we may 
resolve simultaneously all singularities of $E$. In this way we would obtain a family of irreducible 
curves degenerating to a disconnected one, contradicting the classical Principle of 
Connectedness (see for instance \cite{Hartshorne77}, III, Exercise 11.4 on p. 281).
\end{proof}

\begin{lem}\label{severi}
There is at least one point $P \in \Gamma_2$ which 
	   is not a base point of the complete linear series
	   $\vert {E_0}_{\mid D} -\Gamma_1\vert$ on $D$.
\end{lem}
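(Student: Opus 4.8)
The plan is to reduce the statement to the existence of a single global section of a line bundle on $D$ that does not vanish at some point of $\Gamma_2$, and to build such a section as a product of one section coming from $C$ and one coming from $D$. First I would record the two identities of line bundles on $D$ that make everything explicit. Since every curve in the family is nodal, $C$ and $D$ meet transversally, so $C_{\mid D}=\Gamma=\Gamma_1+\Gamma_2$ as a reduced divisor on $D$; restricting $E_0=C+D$ to $D$ therefore gives
\[
\mathcal{O}_D({E_0}_{\mid D}-\Gamma_1)\;\cong\;\mathcal{O}_D(\Gamma_2)\otimes\mathcal{O}_D(D_{\mid D}),\qquad \mathcal{O}_D(C-\Gamma_1)\;\cong\;\mathcal{O}_D(\Gamma_2).
\]
A point $P\in D$ fails to be a base point of $\vert {E_0}_{\mid D}-\Gamma_1\vert$ precisely when some $s\in H^0(D,\mathcal{O}_D({E_0}_{\mid D}-\Gamma_1))$ has $s(P)\neq 0$, so it suffices to exhibit such an $s$ for one $P\in\Gamma_2$, which is non-empty by Lemma \ref{nonempty}.

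Next I would produce the two factors. Restricting the sections of $\vert C(-\Gamma_1)\vert$ to $D$ and dividing out the equation of $\Gamma_1$ gives a subspace $V\subseteq H^0(D,\mathcal{O}_D(\Gamma_2))$ whose associated series of divisors on $D$ is, by hypothesis, base-point-free; hence for every $P\in D$ there is $a\in V$ with $a(P)\neq 0$. On the other side, since $\vert D_{\mid D}\vert$ is non-empty I may fix a non-zero $b\in H^0(D,\mathcal{O}_D(D_{\mid D}))$, and its base locus on the curve $D$ is a finite set. Choosing $P\in\Gamma_2$ outside this finite base locus, I can arrange $a(P)\neq 0$ and $b(P)\neq 0$ simultaneously, so that $s:=a\cdot b\in H^0(D,\mathcal{O}_D(\Gamma_2)\otimes\mathcal{O}_D(D_{\mid D}))=H^0(D,\mathcal{O}_D({E_0}_{\mid D}-\Gamma_1))$ satisfies $s(P)=a(P)\,b(P)\neq 0$. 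This $P$ is then not a base point of $\vert {E_0}_{\mid D}-\Gamma_1\vert$, which is exactly the assertion.

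The main obstacle is the last choice: I must guarantee that at least one point of $\Gamma_2$ avoids the finite base locus of $\vert D_{\mid D}\vert$, since if all of $\Gamma_2$ lay in that base locus every product $a\cdot b$ would vanish along $\Gamma_2$ and this elementary construction would break down. This is precisely where the hypothesis that $C$ be sufficiently general with respect to $D$ enters: for general $C$ the connection points $\Gamma_2\subset C\cap D$ are general enough on $D$ not to be swallowed by the finitely many base points of $\vert D_{\mid D}\vert$. I would also take care, in writing the details, to pin down the meaning of ``$\vert C(-\Gamma_1)\vert$ has no base points on $D$'' as the base-point-freeness on $D$ of the restricted series $V$ above, which is what licenses the choice of $a$ with $a(P)\neq 0$ at our chosen $P$.
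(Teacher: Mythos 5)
Your reduction of the lemma to exhibiting a non-vanishing section, and your construction of the factor $a$ from the base-point-freeness of $\vert C(-\Gamma_1)\vert$ on $D$, are both fine; the gap is exactly at the step you flag as ``the main obstacle.'' You need a point $P\in\Gamma_2$ and a section $b\in H^0(D,\mathcal{O}_D(D_{\mid D}))$ with $b(P)\neq 0$, i.e.\ you need $\Gamma_2$ not to be contained in the base locus of $\vert D_{\mid D}\vert$ --- and this is \emph{not} among the hypotheses. The only generality assumption the theorem makes precise is that $\vert C(-\Gamma_1)\vert$ has no base points on $D$; nothing is assumed about the position of $\Gamma_2$ relative to the base locus of $\vert D_{\mid D}\vert$, which can perfectly well be non-empty (e.g.\ if $h^0(D,D_{\mid D})=1$ and $\deg D_{\mid D}>0$, the base locus is an entire divisor). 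Your appeal to genericity is not licensed here: $C$ is part of the given degeneration, and $\Gamma_2$ is not an arbitrary subset of $C\cap D$ but the set of intersection points that are \emph{not} limits of nodes of the general curve, so it is determined by the family; one cannot move $C$ to put $\Gamma_2$ in general position without changing the family and hence $\Gamma_1$ and $\Gamma_2$ themselves. In effect you have strengthened the hypotheses of the lemma rather than proved it.

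The paper's proof avoids this issue entirely by a dimension count, arguing by contradiction: if every point of $\Gamma_2$ were a base point of $\vert {E_0}_{\mid D}-\Gamma_1\vert = \vert D_{\mid D}+\Gamma_2\vert$, then $h^0(D,D_{\mid D}+\Gamma_2)=h^0(D,D_{\mid D})$; but on the integral curve $D$ one has $h^0(D,D_{\mid D}+\Gamma_2)\geq h^0(D,\Gamma_2)+h^0(D,D_{\mid D})-1$ (the sum map on non-empty linear series, or equivalently a Hopf-type bound on the multiplication map), and $h^0(D,\Gamma_2)\geq 2$ because the restricted series is base-point-free of positive degree $c_2>0$ by Lemma \ref{nonempty}; together these give $h^0(D,D_{\mid D}+\Gamma_2)\geq h^0(D,D_{\mid D})+1$, a contradiction. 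Note that this argument works even when $\Gamma_2$ lies inside the base locus of $\vert D_{\mid D}\vert$: the section it produces that does not vanish at $P$ need not be a product $a\cdot b$. In fact, the same Hopf-type inequality shows that, under the stated hypotheses, $\Gamma_2$ \emph{cannot} be contained in that base locus (all products would vanish on $\Gamma_2$, forcing their span into a subspace of dimension $h^0(D,D_{\mid D})$, against the bound $h^0(D,D_{\mid D})+1$), so your missing step is provable --- but only by an argument of exactly this dimension-counting kind, which is the real content of the lemma and is absent from your proposal.
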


\begin{proof}
       Assume by contradiction that 
	   \begin{equation}\label{generic}
	   h^0(D, {E_0}_{\mid D} - \Gamma_1) = h^0(D, {E_0}_{\mid D}-\Gamma_1 - \Gamma_2) 
	   = h^0(D, {E_0}_{\mid D}-\Gamma).
	   \end{equation}
	   On the other hand, since $E_0 = C+D$ we have
	   $\vert {E_0}_{\mid D}- \Gamma_1\vert \supseteq \vert C_{\mid D} - \Gamma_1 \vert
	   + \vert D_{\mid D}\vert$.
	   Moreover, since $C_{\mid D} - \Gamma_1 = \Gamma_2 \neq \emptyset$ by Lemma \ref{nonempty}
	   and $\vert C(-\Gamma_1) \vert$ has no base points on $D$ by assumption, 
	   we have $h^0(D, C_{\mid D} - \Gamma_1) \geq 2$. 
	   Hence we deduce 
	   \begin{eqnarray*}
	   h^0(D, {E_0}_{\mid D}-\Gamma_1) &\ge& h^0(D, C_{\mid D} - \Gamma_1)
	   + h^0(D, D_{\mid D}) - 1 
	   \geq h^0(D, D_{\mid D}) + 1 \\
	   &=& h^0(D, {E_0}_{\mid D} - C_{\mid D}) + 1 
	   = h^0(D, {E_0}_{\mid D} - \Gamma) + 1,
	   \end{eqnarray*}
	   contradicting (\ref{generic}), so the claim is established.
\end{proof}

	\begin{proof}[Proof of Theorem \ref{splitting}]
	   We follow Segre's approach in \cite{Segre38}.
	   Let $d:= h^1(D,D_{\mid D})$. We have two possibilities: 
	   (i) $c_2\geq d+1$ or (ii) $c_2\leq d$.
	   
		\begin{enumerate}[(i)]
			\item Suppose $c_2\geq d+1$. Let $i:=h^2(F,D)$. We first prove that 
			\begin{equation} \label{specialitysing}
			d\geq p_g-i.
			\end{equation}
			Indeed, by adjunction $K_{F_{\mid D}}=\omega_D-D_{\mid D}$ and 
			by Serre duality
			$$d=h^1(D,D_{\mid D})=h^0(D,\omega_D-D_{\mid D})=h^0(D,K_{F_{\mid D}}).$$
			The short exact sequence on $F$
			$$0\to K_F(-D)\to K_F\to K_{F\mid D}\to 0$$
			yields a long exact sequence
			$$ 0\to H^0(K_F(-D))\to H^0(K_F)\to H^0(K_{F_{\mid D}})\to \ldots$$
			hence $p_g \leq i + d$.
			
			If $i=0$, we immediately get $c_2\geq d+1\geq p_g+1$.
			If instead $i>0$, then the points in $\Gamma_2$ are dependent with respect to $K_F$, i.e. $h^0(F,K_F(-\Gamma_2))>p_g-c_2$.
			Indeed, on the one hand 
			by \eqref{specialitysing} we have
			\begin{equation}\label{eq2sing}
			 p_g-c_2\leq p_g-d-1 \leq p_g-p_g+i-1=i-1.
			\end{equation}
			On the other hand, since any global section of $K_F$ which vanishes on $D$ vanishes in particular on $\Gamma_2$, we have
			$$h^0(F,K_F(-\Gamma_2))\geq h^0(F,K_F(-D))= h^2(F, D) = i.$$
			By $\eqref{eq2sing}$ we conclude that $h^0(F,K_F(-\Gamma_2))\geq i >p_g-c_2$.
		\item Suppose $c_2\leq d$.
		    Let $P\in \Gamma_2$ and set $\Gamma_2^*:=\Gamma_2 \setminus P$.\footnote{Note that $\Gamma_2$ is non empty by Lemma \ref{nonempty}, but $\Gamma_2^*$ might be.}
	    	Observe first that the linear series 
	    	$$\vert D_{\mid D}+\Gamma_2^* \vert$$
		on $D$ is special. 
		In fact, 
		$h^1(D,D_{\mid D}+\Gamma_2^*)= h^0(D, \omega_D-D_{\mid D}-\Gamma_2^*)\geq  h^0(D,\omega_D-D_{\mid D})-c_2+1=h^1(D,D_{\mid D})-c_2+1=d-c_2+1\geq 1.$
		In particular, by adjunction we have that $H^0(D,\omega_D-D_{\mid D}-\Gamma_2^*)\cong H^0(D,K_{F_{\mid D}}-\Gamma_2^*)$ is non-zero. 
		
		We are going to prove that the natural inclusion 
	    	$$H^0(F,K_F-\Gamma_2) \subseteq H^0(F,K_F-\Gamma_2^*)$$
	    	is an isomorphism for some choice of $P\in \Gamma_2$, i.e. that the points in $\Gamma_2$ are dependent with respect to $K_F$. 
	    	
		Indeed, by Lemma \ref{severi}, there exists at least one $P \in \Gamma_2$ 
		such that the complete linear series 
		$$\vert E_{0\mid D}-\Gamma_1\vert
		= \vert {C}_{\mid D}+{D}_{\mid D} -\Gamma_1\vert =
	   \vert {D}_{\mid D} + \Gamma -\Gamma_1\vert =
	   \vert D_{\mid D}+ \Gamma_2 \vert$$
		on $D$ does not admit $P$ as a base point. 
		 
		 On the other hand, by the Riemann-Roch theorem
		 $$h^0(D_{\mid D}+\Gamma_2)=h^1(D_{\mid D}+\Gamma_2)+\deg (D_{\mid D}+\Gamma_2) + 1-p_a(D)$$
		 $$h^0(D_{\mid D}+\Gamma_2^*)=h^1(D_{\mid D}+\Gamma_2^*)+\deg (D_{\mid D}+\Gamma_2)-1+1-p_a(D).$$
		 Since $h^0(D_{\mid D}+\Gamma_2^*)=h^0(D_{\mid D}+\Gamma_2)-1$ then $h^1(D_{\mid D}+\Gamma_2)=h^1(D_{\mid D}+\Gamma_2^*)$ and by Serre duality
		\begin{equation}\label{eq3sing} h^0(D,\omega_D-D_{\mid D}-\Gamma_2^*) = h^0(D,\omega_D-D_{\mid D}-\Gamma_2).
		\end{equation}
		
		Suppose now by contradiction that the inclusion $H^0(F,K_F-\Gamma_2) \subseteq H^0(F,K_F-\Gamma_2^*)$
		is strict, i.e. that there exists an effective divisor $A$ in $\mathbb{P}H^0(F,K_F-\Gamma_2^*)$ not passing through $P$.
		Note that $A \cap D \neq D$, since $P\in D\setminus A$.
		
		Now, if $\Gamma_2^*$ is not empty, then 
		$A \cap D \neq \emptyset$, 
		 since $\emptyset\neq \Gamma_2^*\subset A\cap D$, and $A_{\mid D}$ is a nontrivial effective divisor on $D$ lying in $\mathbb{P}H^0(D,K_{F_{\mid D}}-\Gamma_2^*)\setminus \mathbb{P}H^0(D,K_{F_{\mid D}}-\Gamma_2)$, contradicting \eqref{eq3sing}.
		
		The same conclusion holds if $\Gamma_2^*$ is empty but $A \cap D \neq \emptyset$. On the other hand, if $\Gamma_2^*$ is empty and $A_{\mid D} = 0$, then 
		$K_{F_{\mid D}} \cong \mathcal{O}_D$ and by adjunction we have $\omega_D = D_{\mid D}$. Hence \eqref{eq3sing} 
		implies 
		$$
		1 = h^0(D,\mathcal{O}_D) = h^0(D,\mathcal{O}_D(-P)) = 0
		$$
		and this contradiction ends the proof. 
			\end{enumerate}
		\end{proof}

\vspace{0.5cm}

\noindent
Camilla Felisetti \newline
Dipartimento di Matematica \newline
Universit\`a di Trento \newline
Via Sommarive 14 \newline
38123 Trento, Italy. \newline
E-mail: camilla.felisetti@unitn.it

\vspace{0.5cm}

\noindent
Claudio Fontanari \newline
Dipartimento di Matematica \newline
Universit\`a di Trento \newline
Via Sommarive 14 \newline
38123 Trento, Italy. \newline
E-mail: claudio.fontanari@unitn.it

\end{document}